 \newtheorem{theorem}{Theorem}[section]
 \newtheorem{cor}[theorem]{Corollary}
 \newtheorem{lem}[theorem]{Lemma}
  \newtheorem{rem}[theorem]{Remark}
 \newtheorem{exam}[theorem]{Example}
 \numberwithin{equation}{section}
\begin{document}

\begin{abstract}
In this paper we  deal with the convergence of sequences of  positive linear maps to    a (not assumed to be linear)
 isometry on spaces of continuous  functions. We obtain generalizations of known Korovkin-type results  and provide several illustrative examples.
\end{abstract}

\title[theorems of Korovkin-type]{Korovkin-type results on convergence of sequences of positive linear maps on function spaces}

\author{Maliheh Hosseini}
\address{Department of Mathematics,
K. N. Toosi  University of Technology, Tehran, 16315-1618, Iran}
\email{m.hosseini@kntu.ac.ir}

\author{Juan J. Font}
\address{Departamento de Matem\'aticas\\
Universitat Jaume I\\ Campus Riu Sec\\ 8029 AP, Castell\'on,
Spain} \email{font@mat.uji.es}
\thanks{2010 {\em Mathematics Subject Classification}.
Primary 41A36; Secondary 46E15}
\thanks{Key words and phrases: Function space, Korovkin's theorem,  Choquet boundary,
 positive linear map.}

\thanks{J.J. Font is supported by Spanish Government grant MTM2016-77143-P  (AEI/FEDER, UE) and Generalitat Valenciana (Projecte GV/2018/110)}

\maketitle

\section{Introduction }

One of the most impressive results in approximation theory is, without doubt, Korovkin's theorem on convergence of positive linear operators on a space of continuous functions. More explicitly, Korovkin's theorem (often called Korovkin's first theorem) states that if a  sequence $\{T_n\}$ of positive linear maps   on $C_{\Bbb R}[0,1]$ converges to the identity operator on the quadratic polynomials, then $T_n f$ converges to $f$ for all $f\in C_{\Bbb R}[0,1]$ (\cite{Ko}). This result arose from a generalization of the well-known proof of Weierstrass's approximation theorem given by S. Bernstein. Its strength and simplicity have produced, as it is clearly imaginable,  a wide range of applications and generalizations. One of them deals with substituting the identity operator by other operators and the closed interval $[0,1]$ by other spaces. Others center on finding subsets of  function spaces, known  as
Korovkin sets or test functions, which guarantee that  the convergence of a sequence of positive linear maps holds on the whole space provided      it holds on them. For more details and other aspects of this topic, we refer to the monographs \cite{8,46}, the recent survey paper by Altomare \cite{Alt}, and the references therein.

Let  $X$ and $Y$ be compact Hausdorff
spaces, $M$ be a unital subspace of $C(X)$, and  $S$ be  a function space included in $M$. In  \cite{HO}, the authors studied the convergence of a sequence of unital linear contractions towards a fixed linear isometry. Indeed, they proved that, under certain assumptions, if each $T_n: M \longrightarrow C(Y)$ ($n\in \Bbb N$) is a unital linear contraction and $T_\infty: M \longrightarrow C(Y)$ is a linear isometry such that $\{T_n f\}$ converges to $T_\infty f$  for all $f\in S$, then  $\{T_n f\}$ converges to $T_\infty f$ for all $f\in M$, not only pointwise but also uniformly. In this paper we  deal with the convergence of sequences of (not necessarily  contractions) positive linear maps to    a (not assumed to be linear)
 isometry on spaces of continuous  functions by combining ideas given in   \cite{HO} and in the original proof of Korovkin's theorem. In particular, we obtain proper generalizations of
  \cite[Theorems 3.1 and 4.1]{HO}  and of several classical Korovkin-type results, and provide several illustrative examples.

\section {Preliminaries}
For any compact Hausdorff
space  $X$, let  $C(X)$ denote the space of
continuous real or  complex-valued functions on $X$, equipped with the uniform  norm $\|\cdot\|$. Note that we write $C_{\Bbb R}(X)$ instead of $C(X)$ when we want to consider only real-valued case. A unital
subspace  $S$ of $C(X)$ is called a {\it function space} on
$X$ if $S$  separates  the points of $X$ in the sense that
for each $x,x'\in X$ with $x\neq x'$ there exists a function
$f\in S$ such that  $f(x)\neq f(x')$.

Let $S$ be a subspace of $C(X)$, which we always assume to be linear. We denote by $\mathcal{B}_{S^{\ast}}$ the  closed unit ball of the dual
space of $(S, \|\cdot\|)$. A nonempty subset $E$ of
$X$ is called a \textit{boundary} for $S$ if each function in $S$
attains its maximum modulus within $E$.
The {\em Choquet boundary} $Ch(S)$ of $S$ is the non-empty set of all points
$x\in X$ for which $\delta_{x}$, the evaluation functional  at
$x$, is an extreme point of the closed  unit ball $\mathcal{B}_{S^{\ast}}$. Namely, we have $ext(\mathcal{B}_{S^{\ast}})=\Bbb T \,  Ch(S)=\{\alpha x: \alpha\in \Bbb T \, \textrm{and}  \, x\in Ch(S)\}$, where $\Bbb T=\{z\in \Bbb C: |z|= 1\}$. It is known that $Ch(S)$  is a boundary for  $S$. In particular, one can obtain the following remark immediately:

\begin{rem}  \label{RC}
\em{
If for each $x\in X$ there is a function $h\in S$ such that $h(x)=1$ and $|h(y)|<1$ for any $y\neq x$, then $Ch(S)=X$. For example, as in Korovkin's original theorem, if we assume $X=[0,1]$ and $S=Span\{1,x,x^2\}$, then $h(x):=1-(x-a)^2$, $a\in [0,1]$, yields $Ch(S)=[0,1]$.}
\end{rem}

In the sequel,  unless otherwise stated,
it is assumed that $X$ and $Y$ are     compact Hausdorff spaces, $M$ is a \textit{self-conjugate} subspace of $C(X)$  in the sense that $\bar{f}\in M$ whenever  $f\in M$,
and  $S$ is a function space included in $M$.

A linear map $T:M\longrightarrow C(Y)$  is called  \textit{positive} if $T f\geq 0$ holds for all $f\geq 0$.


Let $f,f_1,f_2,...\in C(X)$ and  $X_0\subseteq X$. If  $\{f_n\}$ converges pointwise to $f$ on $X_0$,  we write $f_n\longrightarrow f$ on $X_0$. Also, we   omit $X_0$ when $X_0= X$.

Given $f,g\in C(X)$, we shall write $f \otimes 1+1 \otimes g$ to denote the function in $C(X\times X)$ such that $(f \otimes 1+1 \otimes g)(x,x'):=f(x)+g(x')$.
Furthermore, if $T,T':S\subseteq C(X)\longrightarrow C(Y)$, then we set $(T\otimes T1\, T')(f\otimes 1+1\otimes g)(y):=Tf(y)+T1(y) T'g(y)$ for all $f,g\in S$ and $y\in Y$.

Finally let us state the following lemma which is used in the proofs of our results.

\begin{lem} \label{B} \cite[Theorem 2.2.6]{Br} 
Let $S$ be a  function space on $X$ and $x_0\in X$. Then  $x_0\in Ch(S)$
if and only if for any $\alpha,\beta\in (0,\infty)$ with $\alpha< \beta$ and any open neighborhood $U$ of $x_0$, there is a function $f\in S$ such that
\emph{Re}$f\leq 0$  on $X$,   \emph{Re}$f<-\beta$ on $U^{c}$ and \emph{Re}$f(x_0)>- \alpha$.
\end{lem}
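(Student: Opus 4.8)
\emph{Proof proposal.} The plan is to deduce the statement from the classical description of Choquet boundary points in terms of representing measures, which I treat as known: for a function space $S$ on $X$, a point $x_0$ lies in $Ch(S)$ if and only if the only regular Borel probability measure $\mu$ on $X$ with $\int_X f\,d\mu=f(x_0)$ for all $f\in S$ is $\mu=\delta_{x_0}$. Granting this, the ``if'' implication is short and the ``only if'' implication carries all the weight. Throughout, if $S$ is a real subspace one simply erases every ``$\mathrm{Re}$''; in the complex case the argument below goes through verbatim, using repeatedly that $1\in S$ and $if\in S$.

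\emph{The ``if'' direction.} Suppose the displayed condition holds, and let $\mu$ be a probability measure representing $\delta_{x_0}$ on $S$. If $\mu\neq\delta_{x_0}$, then $\mu(X\setminus\{x_0\})>0$, so by inner regularity there is a compact set $V\subseteq X\setminus\{x_0\}$ with $\gamma:=\mu(V)>0$. Put $U:=X\setminus V$ and choose $\alpha:=1$ and $\beta:=1+\gamma^{-1}$, so that $0<\alpha<\beta$. The hypothesis, applied to this $U$, $\alpha$, $\beta$, yields $f\in S$ with $\mathrm{Re}\,f\le 0$ on $X$, $\mathrm{Re}\,f<-\beta$ on $U^{c}=V$ and $\mathrm{Re}\,f(x_0)>-1$. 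Integrating against $\mu$ and using that $\mu$ represents $x_0$,
\[
-1<\mathrm{Re}\,f(x_0)=\int_X\mathrm{Re}\,f\,d\mu\le\int_V\mathrm{Re}\,f\,d\mu\le-\beta\gamma=-(1+\gamma)<-1,
\]
which is absurd. Hence $\mu=\delta_{x_0}$, i.e.\ $x_0\in Ch(S)$.

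\emph{The ``only if'' direction.} Assume $x_0\in Ch(S)$ and fix $0<\alpha<\beta$ and an open neighborhood $U$ of $x_0$. Since $X$ is compact Hausdorff, hence normal, Urysohn's lemma gives $g\in C_{\Bbb R}(X)$ with $g(x_0)=0$, $-(\beta+1)\le g\le 0$ on $X$ and $g\equiv-(\beta+1)$ on $U^{c}$. It is then enough to find $f\in S$ with $\mathrm{Re}\,f\le g$ on $X$ and $\mathrm{Re}\,f(x_0)>-\alpha$, since this forces $\mathrm{Re}\,f\le g\le 0$ on $X$ and $\mathrm{Re}\,f\le g=-(\beta+1)<-\beta$ on $U^{c}$. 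Equivalently, I must show
\[
\sup\{\mathrm{Re}\,f(x_0): f\in S,\ \mathrm{Re}\,f\le g\ \text{on}\ X\}=g(x_0)=0 ,
\]
the inequality ``$\le$'' being obvious. For ``$\ge$'' I would run a Hahn--Banach / Choquet-envelope argument: the map $q(h):=\inf\{\mathrm{Re}\,f(x_0):f\in S,\ \mathrm{Re}\,f\ge h\ \text{on}\ X\}$ is a well-defined, finite-valued (one has $|q(h)|\le\|h\|$) sublinear functional on $C_{\Bbb R}(X)$ satisfying $q(\pm 1)=\pm 1$ and $q(\pm\mathrm{Re}\,f)=\pm\mathrm{Re}\,f(x_0)$ for every $f\in S$; extending the linear functional $t\mapsto t\,q(-g)$ defined on $\Bbb R\cdot(-g)$ to a functional $\phi\le q$ on all of $C_{\Bbb R}(X)$ and invoking the Riesz representation theorem, one checks that $\phi=\int_X(\cdot)\,d\mu$ for a regular Borel probability measure $\mu$ which represents $\delta_{x_0}$ on $S$. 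Since $x_0\in Ch(S)$ this gives $\mu=\delta_{x_0}$, hence $q(-g)=\phi(-g)=\int_X(-g)\,d\mu=-g(x_0)=0$, so the supremum above equals $-q(-g)=0$, as required.

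\emph{Main obstacle.} The crux is the penultimate paragraph: checking that the Hahn--Banach extension $\phi$ is positive, of norm one, and genuinely represents $\delta_{x_0}$ on $S$ (this is exactly where $1\in S$ and $if\in S$ enter), together with having at hand the characterization of $Ch(S)$ through uniqueness of representing measures. Once these are in place, the Urysohn construction and the integration estimate in the ``if'' direction are routine.
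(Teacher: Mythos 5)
This lemma is not proved in the paper at all: it is quoted verbatim from Browder's book (\cite[Theorem~2.2.6]{Br}), so there is no in-paper argument to compare against. Judged on its own merits, your proof is correct and is essentially the standard Choquet-theoretic argument one finds in Browder/Phelps: the ``if'' direction by integrating the peaking-type function against a putative representing measure (the arithmetic $-\beta\gamma=-(1+\gamma)<-1$ checks out, and inner regularity legitimately produces the compact set $V$), and the ``only if'' direction by showing the lower envelope of the Urysohn function $g$ agrees with $g$ at $x_0$, via Hahn--Banach extension of $t\mapsto t\,q(-g)$ below the sublinear functional $q$ and Riesz representation. The one substantive input you import without proof is the equivalence between the paper's definition of $Ch(S)$ (that $\delta_{x_0}$ is an extreme point of $\mathcal{B}_{S^{\ast}}$) and the uniqueness-of-representing-measures characterization you actually use; for a unital separating subspace this is a genuine theorem (an Arens--Kelley/Phelps-type result, nontrivial especially in the complex case), not a restatement of the definition, so strictly speaking your proof reduces one quoted theorem to another. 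That is a reasonable trade given that the paper itself cites the whole statement, but it should be flagged as an external dependency rather than folded into ``which I treat as known'' without a reference. The remaining details you defer in your ``main obstacle'' paragraph --- positivity of $\phi$ from $\phi\le q$ and $q(h)\le 0$ for $h\le 0$, the identity $\phi(1)=1$ from $q(\pm1)=\pm1$, and $\int f\,d\mu=f(x_0)$ from $q(\pm\mathrm{Re}\,f)=\pm\mathrm{Re}\,f(x_0)$ applied to both $f$ and $if$ --- all go through exactly as you indicate.
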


\section{Results}

 \begin{theorem}  \label{t1}
 Suppose that  $\{T_n\}$ is a sequence of positive linear maps from $M$ into $C(Y)$, and $T_\infty$ is an isometry from $M$ onto a subspace
 $T_\infty(M)$ of $C(Y)$. 

(a) If $T_n f\longrightarrow T_\infty f$ for all $f\in S$, then  $T_n f\longrightarrow T_\infty f$  on $Ch(T_\infty(S))$ for all $f\in M$.

(b) Let 
$N:=\textrm{Span} \, \bigcup\limits_{1\leq n\leq \infty} T_n(M)$. If,  in part (a), $Ch(N) \subseteq Ch(T_\infty(S))$ and  the set $\{T_n 1:n\in \Bbb N\}$ is bounded, then $T_n f\longrightarrow T_\infty f$ for all $f\in M$.

\end{theorem}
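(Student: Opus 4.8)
My plan is to derive part (a) from the classical Korovkin ``sandwich'' estimate after a \emph{peak point} reduction, and to obtain part (b) from part (a) by a representing-measure argument; the genuinely delicate point will be identifying the limit in (a) with $T_\infty f(y_0)$.

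\emph{Part (a), peak point step.} First observe that $T_\infty|_S$ is linear: for $g,h\in S$ and scalars $a,b$ one has $T_n(ag+bh)=aT_ng+bT_nh\to aT_\infty g+bT_\infty h$ while also $T_n(ag+bh)\to T_\infty(ag+bh)$. Hence $T_\infty|_S\colon S\to T_\infty(S)$ is a surjective linear isometry, so its adjoint $(T_\infty|_S)^{\ast}\colon (T_\infty(S))^{\ast}\to S^{\ast}$ is a surjective linear isometry, and therefore carries $ext(\mathcal{B}_{(T_\infty(S))^{\ast}})$ onto $ext(\mathcal{B}_{S^{\ast}})$. Fixing $y_0\in Ch(T_\infty(S))$, the functional $g\mapsto T_\infty g(y_0)$ on $S$ equals $(T_\infty|_S)^{\ast}(\delta_{y_0}|_{T_\infty(S)})$, hence lies in $ext(\mathcal{B}_{S^{\ast}})=\Bbb T\,Ch(S)$; thus $T_\infty g(y_0)=\lambda\,g(x_0)$ for all $g\in S$, for some $\lambda\in\Bbb T$ and $x_0\in Ch(S)$. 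Positivity gives $T_n1\ge0$, so $0\le T_n1(y_0)\to T_\infty1(y_0)=\lambda$, forcing $\lambda=1$; therefore $T_\infty g(y_0)=g(x_0)$ for every $g\in S$, and in particular $T_n1(y_0)\to1$.

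\emph{Part (a), Korovkin step and identification.} Let $f\in M$ be real-valued and $\varepsilon>0$; pick an open neighbourhood $U$ of $x_0$ with $|f-f(x_0)|<\varepsilon$ on $U$, and apply Lemma~\ref{B} to $S$ at $x_0$ with suitable $0<\alpha<\beta$ (take $\alpha<\varepsilon$ and $\beta>2\|f\|$) to get, after negating, $h\in S$ with $h\ge0$ on $X$, $h>2\|f\|$ on $U^{c}$, $h(x_0)<\varepsilon$ (in the complex-scalar case use $h=-\mathrm{Re}\,g\in M$; since each positive linear $T_n$ commutes with complex conjugation, $T_nh(y_0)\to h(x_0)$ still holds). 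Checking separately on $U$ and $U^{c}$ gives $-\varepsilon\cdot1-h\le f-f(x_0)\cdot1\le\varepsilon\cdot1+h$ in $C_{\Bbb R}(X)$, all terms in $M$; applying $T_n$ and evaluating at $y_0$ yields $|T_nf(y_0)-f(x_0)T_n1(y_0)|\le\varepsilon\,T_n1(y_0)+T_nh(y_0)$, and letting $n\to\infty$ (using $T_n1(y_0)\to1$ and $T_nh(y_0)\to h(x_0)<\varepsilon$) gives $\limsup_n|T_nf(y_0)-f(x_0)|\le2\varepsilon$. Hence $T_nf(y_0)\to f(x_0)$, and $\Bbb C$-linearity of the $T_n$ extends this to all $f\in M$. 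Finally, since $T_\infty$ is an isometry, $|T_\infty f(y_0)-g(x_0)|=|T_\infty f(y_0)-T_\infty g(y_0)|\le\|f-g\|$ for all $g\in S$, so $g\mapsto g(x_0)$ extends to a norm-$\le1$ linear functional on $S+\Bbb C f$ sending $f$ to $T_\infty f(y_0)$; by Hahn--Banach and Riesz it is given by a probability measure $\mu$ on $X$ with $\int g\,d\mu=g(x_0)$ for $g\in S$, and as $x_0\in Ch(S)$ has a unique representing measure, $\mu=\delta_{x_0}$, whence $T_\infty f(y_0)=f(x_0)$. This proves (a).

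\emph{Part (b).} Boundedness of $\{T_n1\}$ gives $C:=\sup_n\|T_n1\|<\infty$; then $-\|f\|\cdot1\le f\le\|f\|\cdot1$ together with positivity gives $|T_nf|\le\|f\|\,T_n1$, so $\|T_n\|\le C$ for all $n$. Fix $f\in M$ and $y_1\in Y$. Since $Ch(N)$ is a boundary for $N$ and $ext(\mathcal{B}_{N^{\ast}})=\Bbb T\,Ch(N)$, choose a boundary probability measure $\nu$ on $Y$, carried by $Ch(N)$, with $\phi(y_1)=\int_Y\phi\,d\nu$ for all $\phi\in N$. Because $T_nf\in T_n(M)\subseteq N$ and $T_\infty f\in T_\infty(M)\subseteq N$, we obtain $T_nf(y_1)-T_\infty f(y_1)=\int_Y(T_nf-T_\infty f)\,d\nu$; the integrand is bounded in modulus by $(C+1)\|f\|$ and, by part (a) together with $Ch(N)\subseteq Ch(T_\infty(S))$, tends to $0$ pointwise on $Ch(N)$, hence $\nu$-a.e., so dominated convergence gives $T_nf(y_1)\to T_\infty f(y_1)$, as required. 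The two points I expect to need extra care are the representing-measure identification $T_\infty f(y_0)=f(x_0)$ above (which is precisely where the isometry hypothesis must be used, with the complex-scalar bookkeeping requiring attention) and the fact that $\nu$ may be taken carried by $Ch(N)$ itself rather than merely by its closure, which rests on boundary-measure theory (Choquet--Bishop--de Leeuw).
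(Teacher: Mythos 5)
Your argument is correct, and part (a) takes a genuinely different route from the paper's. The paper first proves that $T_\infty$ is a \emph{linear} isometry on all of $M$ (its Step 1 manufactures the three-point separation needed to invoke the Koshimizu--Miura--Takagi--Takahasi classification of real-linear isometries, after Mazur--Ulam), obtains the global composition form $T_\infty f=f\circ\phi$ on $Ch(T_\infty(M))$, and then pushes the Korovkin sandwich through that representation with the $\otimes$ bookkeeping. You instead work locally at a single $y_0\in Ch(T_\infty(S))$: linearity of $T_\infty|_S$ is free as a pointwise limit of linear maps, the adjoint of the surjective isometry $T_\infty|_S$ sends $\delta_{y_0}$ to an extreme point $\lambda\delta_{x_0}\in\Bbb T\,Ch(S)$, positivity forces $\lambda=1$, the sandwich estimate (with $h=-\mathrm{Re}\,g\in M$ and $T_nh=-\mathrm{Re}\,T_ng$, valid since positive maps on the self-conjugate $M$ commute with conjugation) gives $T_nf(y_0)\to f(x_0)$, and the isometry property alone identifies $T_\infty f(y_0)=f(x_0)$ via Hahn--Banach and uniqueness of the representing measure at $x_0$. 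This buys a proof that avoids Mazur--Ulam and the real-linear isometry representation theorem entirely, at the price of not producing the global structure of $T_\infty$ that the paper reuses. Two points you should make explicit: (i) the uniqueness of the representing probability measure at $x_0\in Ch(S)$ is not automatic from the definition of the Choquet boundary but does follow from Lemma~\ref{B} (a representing $\mu$ satisfies $\mu(U^{c})<\alpha/\beta$ for every neighborhood $U$ of $x_0$, whence $\mu=\delta_{x_0}$); and (ii) in part (b), which otherwise matches the paper, the representing measure really lives on $\mathcal{B}_{N^{\ast}}$ and is only pseudo-carried by $ext(\mathcal{B}_{N^{\ast}})=\Bbb T\,Ch(N)$, so one must carry the unimodular factor through the dominated-convergence step (and, as the paper does by passing to $Y/\!\sim$, account for $N$ possibly failing to separate points of $Y$); neither issue affects the conclusion.
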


 \begin{proof}
We will base the proof of (a)  through the following steps.

\textbf{Step 1.} For each triple    of distinct points $x,x',z\in Ch(M)$, there exists a function $h\in M$ such that $|h(x)|\neq |h(x')|$ and $h(z)=0$.

Since $M$ is a self-conjugate function space we can find a real-valued function $f\in M$ such that $f(x)=1$ and $f(x')=0$.  Now we consider the following cases based on the value of $f$ at $z$:
\begin{itemize}

\item $f(z)=1$. Clearly, $h=1-f$ is the desired function.

\item  $f(z)\neq 1, \frac{1}{2}$. Take $h=f-f(z)$.

\item $f(z)=\frac{1}{2}$. In this case we choose a non-negative  function $g$ in $M$
with $g(x),g(x')>3$ and $g(z)<\frac{1}{2}$, by Lemma \ref{B}. If $g(x')-g(x)=2$, then $h=g-g(z)$ is the desired function. Otherwise, we can see that $h=2f+g-g(z)-1$ satisfy the requested properties.
\end{itemize}
\medskip
\textbf{Step 2.} $T_\infty$ is a linear isometry.

Note that $T_\infty 0= \lim T_n 0=0$. Then  according to the Mazur-Ulam theorem \cite{M}, $T_\infty$ is a real-linear isometry. Hence now we only need to consider the complex case. Let us point out  that $T_\infty 1= \lim T_n 1\geq 0$. Taking into account
Step 1, from \cite[Theorem 2.3]{Miu} it follows that $T_\infty 1= 1$ and there exist a (possibly  empty) clopen subset $K$ of $Ch(T_\infty(M))$, and a continuous surjective map
 $\phi:Ch(T_\infty(M))\longrightarrow
Ch(M)$ such that for all $f\in M$,
\[T_\infty f=\left\lbrace
  \begin{array}{c l}
    f\circ \phi &  \,\, \textrm{on} \,  K,\\
  \overline{f\circ \phi} &  \,\, \textrm{on} \,
Ch(T_\infty(M))\setminus K.
  \end{array}
\right. \]
But $T_\infty i= \lim T_n i=i \lim T_n 1=iT_\infty 1=i$, which implies that $K=Ch(T_\infty(M))$. Hence taking into account that $Ch(T_\infty(M))$ is a boundary for $T_\infty(M)$, we deduce that
$T_\infty$ is a linear isometry.

\medskip
\textbf{Step 3.} For each $f\in M$, $T_nf \longrightarrow T_\infty f$ on $Ch(T_\infty(S))$.


By \cite[Lemma 2.5]{HO} (or \cite[Corollary 3.2]{AF}), there is a continuous surjection
 $\varphi:Ch(T_\infty(S))\longrightarrow
Ch(S)$ such that
\[T_\infty f(y)=f( \varphi(y)) \,\,\, (f\in S, y\in Ch(T_\infty(S))).\]

Let $f\in M$ and $\epsilon>0$. Then we can define a function in $C(X\times X)$ as $F:=f \otimes 1-1 \otimes f$. Clearly, $F=0$ on the subset  $\bigtriangleup_X=\{(x,x):x\in X\}$ of $X \times X$. Then
there is an open neighborhood $U$ of  $\bigtriangleup_X$ with $|F|<\epsilon$ on $U$.

Let $y'\in Ch(T_\infty(S))$ and $x'=\varphi(y')$. Choose an open neighborhood $V_{x'}$ of $x'$ such that $V_{x'}\times V_{x'}\subseteq U$. By Lemma \ref{B}, we find a function
$f_{y'}\in S$ such that
\[\textrm{Re} f_{y'}\geq 0 \,\, \textrm{on}\, \,X, \, \, \textrm{Re} f_{y'}\geq 1\, \,\textrm{ on} \,\,V_{x'}^{c},\, \,\textrm{Re} f_{y'}(x')< \epsilon. \]
Put $F_{y'}=f_{y'} \otimes 1+ 1 \otimes f_{y'}$. It is clear that $\textrm{Re} F_{y'}\geq 0$ on  $X \times X$ and $\textrm{Re} F_{y'}\geq 1$ on $U^{c}$. Hence we have
\[\textrm{Re} F\leq \|F\|\leq \|F\| \textrm{Re} F_{y'}\,\,\, \textrm{on} \,\, U^{c},\]
which yields
$ |\textrm{Re} F|\leq 1 \otimes \epsilon + \|F\| \textrm{Re} F_{y'}$  on $X \times X$.
In other words,
\[ -(1 \otimes \epsilon + \|F\| \textrm{Re} F_{y'}) \leq \textrm{Re} F \leq 1 \otimes \epsilon + \|F\| \textrm{Re} F_{y'}\,\,\, \textrm{on} \,\, X \times X.\]
Hence for each $y\in X$ we get
\[-\epsilon-2\|F\| \textrm{Re} f_{y'}-\|F\| \textrm{Re} f_{y'}(y)+\textrm{Re} f(y)\leq \textrm{Re} f-\|F\| \textrm{Re} f_{y'}\leq \epsilon+\|F\| \textrm{Re} f_{y'}(y)+\textrm{Re} f(y).\]

Since $\{T_n\}$ is a sequence of linear positive maps, it follows that
\[-  2\|F\| T_n (\textrm{Re} f_{y'})+(-\epsilon-\|F\| \textrm{Re} f_{y'}(y)+\textrm{Re} f(y)) T_n 1\leq T_n( \textrm{Re} f)-  \|F\| T_n (\textrm{Re} f_{y'})\leq\]\[ T_n1 (\epsilon+\|F\| \textrm{Re} f_{y'}(y)+\textrm{Re} f(y))\]
for each $y\in X$. Now, from the  representation of  $T_\infty$ on $M$ (Step 2), we deduce that
\[-  2\|F\| T_n( \textrm{Re} f_{y'})(z)+T_\infty(-\epsilon-\|F\| \textrm{Re} f_{y'}+\textrm{Re} f) (z')T_n 1(z)\leq T_n (\textrm{Re} f)(z)-\|F\|  T_n (\textrm{Re} f_{y'})(z)\leq\]
\[ T_n1 (z) T_\infty(\epsilon+\|F\| \textrm{Re} f_{y'}+\textrm{Re} f)(z')\]
for any $z\in Y$ and $z'\in Ch(T_\infty(M))$. Thus, again since $T_\infty 1= 1$, $T_\infty$ is   a positive linear map and also $Ch(T_\infty(M))$ is a boundary for $T_\infty(M)$, it is observed that the above relation holds for all $z,z'\in Y$. Therefore, especially  we get
\[-||F|| T_n( \textrm{Re} f_{y'}) - T_n 1 T_\infty (\epsilon+\|F\| \textrm{Re} f_{y'}) \leq T_n (\textrm{Re} f)-  T_n 1  T_\infty (\textrm{Re} f)  \leq T_n1 T_\infty(\epsilon+\|F\| \textrm{Re} f_{y'})+||F|| T_n (\textrm{Re} f_{y'})\]
on $Y$. Rewriting the above inequality adopted to our notation in Section 2 we have
\[- (T_n\otimes T_n 1 T_\infty) (1 \otimes \epsilon + \|F\| \textrm{Re} F_{y'})\leq (T_n\otimes T_n 1 T_\infty) (\textrm{Re} F)  \leq (T_n\otimes T_n 1 T_\infty)(1 \otimes \epsilon + \|F\| \textrm{Re} F_{y'}),\]
equivalently,
\[|(T_n\otimes T_n 1  T_\infty)(\textrm{Re} F) |\leq  (T_n\otimes T_n 1 T_\infty)(1 \otimes \epsilon + \|F\| \textrm{Re} F_{y'}).\]

Consequently,   from the fact that each $T_n$ is a positive linear map and the representation of  $T_\infty$, it follows that
\begin{align*}\frac{}{}
|\textrm{Re}(T_n\otimes T_n1 T_\infty)(F)|&= |\textrm{Re} T_n f- \textrm{Re} (T_n1 T_\infty f)| \\ &=
|T_n (\textrm{Re} f)-T_n1 T_\infty (\textrm{Re} f)|  \\ &=
|(T_n\otimes T_n1 T_\infty)(\textrm{Re}F)|\\ &\leq
(T_n\otimes T_n1 T_\infty)(1 \otimes \epsilon + \|F\| \textrm{Re} F_{y'})\\ &=(T_n\otimes T_n1 T_\infty)(1 \otimes \epsilon) +(T_n\otimes T_n1 T_\infty)( \|F\| \textrm{Re} F_{y'})\\ &=
 \epsilon T_n1 + \|F\|(T_n(\textrm{Re} f_{y'})+ T_n1 T_\infty(\textrm{Re} f_{y'}))\\ &=\epsilon T_n1 + \|F\|(\textrm{Re} T_n f_{y'}+ T_n1 \textrm{Re} T_\infty f_{y'})\\ &\leq \epsilon T_n1 + \|F\|(| T_n f_{y'}- T_\infty f_{y'}|+  T_n1 \textrm{Re} T_\infty f_{y'}+ \textrm{Re} T_\infty f_{y'}),
\end{align*}
which is to say,
\[|\textrm{Re}(T_n\otimes T_n1 T_\infty)(F)|\leq \epsilon T_n1 + \|F\|(| T_n f_{y'}- T_\infty f_{y'}|+  T_n1 \textrm{Re} T_\infty f_{y'}+ \textrm{Re} T_\infty f_{y'}).\]
Thus, from the latter inequality, the  representation of  $T_\infty$ and for any sufficiently large integer $n$, we get
\begin{align*}\frac{}{}
|\textrm{Re} T_n f(y')-\textrm{Re} T_\infty f(y')| &\leq  |\textrm{Re} T_n f(y')- T_n1(y') \textrm{Re} T_\infty f(y')|  + |T_n1(y') \textrm{Re} T_\infty f(y')- \textrm{Re} T_\infty f(y')|\\ &\leq \epsilon T_n1(y')+ \|F\|(| T_n f_{y'}(y')- T_\infty f_{y'}(y')| +  T_n1(y') \textrm{Re}  f_{y'}(x')+\\ & \textrm{Re}  f_{y'}(x'))+ |\textrm{Re} T_\infty f(y')|| T_n1(y')-1|
 \\ &\leq 2 \epsilon+ \|F\|(\epsilon + 2 \epsilon+  \epsilon)+ \|f\| \epsilon\\ & =(2+4 \|F\|+ \|f\|)\epsilon.
\end{align*}
Hence $\textrm{Re} T_n f\longrightarrow \textrm{Re} T_\infty f$ on $Ch(T_\infty(S))$. By replacing $f$ by $-if$, we see that  $\textrm{Im}T_n f\longrightarrow \textrm{Im} T_\infty f$ on $Ch(T_\infty(S))$. Therefore,  $T_n f\longrightarrow T_\infty f$ on $Ch(T_\infty(S))$, which completes the proof of part (a).

\medskip
(b) 
We first claim that $\|T_n\|\leq \sqrt{2} \|T_n 1\|$, where $\|T_n\|$ is the operator norm of $T_n$ (for each $n\in \Bbb N$). To see this, assume that  $g\in M$  is real-valued and has supremum norm at most 1.  Then
$-1\leq g \leq 1$ and thus, $-T_n 1\leq T_n g \leq T_n 1$, which implies that $\|T_n g\|\leq  \|T_n 1\|$. In the real case, this shows that $T_n$ is continuous and the claim holds.
In the complex case,  from this argument and the fact that $M$ is
self-conjugate, it easily follows that $\|T_n\|\leq \sqrt{2} \|T_n 1\|$.

Let $f\in M$. Taking into account  the above claim and the boundedness of  $\{T_n 1:n\in \Bbb N\}$, we deduce that  the set $\{T_n f:n\in \Bbb N\}$ is bounded.  
Now one can follow the last part of the proof of
 \cite[Theorem 3.3]{HO} to conclude that $T_n f\longrightarrow T_\infty f$ on $Y$ and we include it for completeness. Assume  that $\sim$
is the equivalence relation on $Y$ defined by
\[y\sim y'\Leftrightarrow g(y)=g(y') \,\, \hspace{1cm} \forall g\in N.\]
The quotient space of $Y$ by $\sim$ is denoted by $Y/\sim$, and $\hat{y}$ will stand for the image of $y\in Y$ under the canonical map $\hat{\cdot}$ from Y onto $Y/\sim$. Moreover, we define $\hat{g}(\hat{y})=g(y)$ for all $g$ in $N$ and $y$ in $\hat{Y}=\{\hat{y}:y\in Y\}$.  It is apparent that $\hat{N}=\{\hat{g}:g\in N\}$ is a function space on  the compact space $\hat{Y}$.

By \cite[Section V]{Bi} and  \cite[Section 4]{P1}, for any $y\in Y$, there exits a positive measure $\mu$ on the $\sigma$-ring of subsets of $\mathcal{B}_{\hat{N}^{\ast}}$ generated
by  $ext(\mathcal{B}_{\hat{N}^{\ast}})$ and the Baire subsets of $\mathcal{B}_{\hat{N}^{\ast}}$ which represents $\hat{y}$ and  $\mu(\mathcal{B}_{\hat{N}^{\ast}})=1$.
From part (a), it is clear that $\widehat{T_n f}\longrightarrow \widehat{T_\infty f}$ on $\widehat{Ch(T_\infty(S))}$. Hence, since
$ext(\mathcal{B}_{\hat{N}^{\ast}})=\Bbb T \, Ch(\hat{N})\subseteq \Bbb T \, \widehat{Ch(T_\infty(S))}$ and the set $\{T_n f: n\in \Bbb N\}$ is bounded, from the Lebesgue's dominated convergence theorem we get
\[T_n f(y)=\widehat{T_n f}(\hat{y})=\int_{\mathcal{B}_{\hat{N}^{\ast}}} \widehat{T_n f}\longrightarrow \int_{\mathcal{B}_{\hat{N}^{\ast}}} \widehat{T_\infty f} d\mu=\widehat{T_\infty f}(\hat{y})=T_\infty f(y).\]
Therefore, $T_n f\longrightarrow T_\infty f$, as desired. \end{proof} 

Let us recall here the famous Arzela-Ascoli theorem, which will be used in the proof of the next result.

\noindent
\textbf{Theorem (Arzela-Ascoli).} Given a subset $A$ of  $C(X)$, the following statements are equivalent:

 (1) $A$ is a compact subset of $(C(X), \|\cdot\|)$.

(2) $A$ is closed,  bounded, and equicontinuous in the sense that for each $x\in X$ and $\epsilon>0$, there exists a neighborhood $V$ of $x$ such that $|f(y)-f(x)|<\epsilon$ for all $f\in A$ and $y\in V$.

 \begin{theorem}  \label{t2}
Let $\{T_n\}$ be a sequence of positive linear maps from $M$ into $C(Y)$, and $T_\infty$ be an isometry from $M$ onto a subspace
 $T_\infty(M)$ of $C(Y)$. 

 (a) If $\{T_n f\}$ converges uniformly to $T_\infty f$  for all $f\in S$, then  $\{T_n f\}$ converges uniformly to
 $T_\infty      f$  on each compact subset of $Ch(T_\infty(S))$ for all $f\in M$.

 (b) If, furthermore,  either $Ch(T_\infty(S))$ or $Ch(N)$ is compact and $Ch(N)\subseteq Ch(T_\infty(S))$, then  $\{T_n f\}$ converges uniformly to
 $T_\infty      f$ for any $f\in M$, where  $N$ is   as in Theorem \ref{t1}.
\end{theorem}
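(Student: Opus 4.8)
The plan is to rerun the proof of Theorem \ref{t1} while tracking uniformity. For part (a) I would redo its Step 3 estimate, but choose the Korovkin-type test functions supplied by Lemma \ref{B} in a way that works simultaneously for all points of a prescribed compact subset of $Ch(T_\infty(S))$. For part (b) I would then read off uniform convergence on all of $Y$ from the fact that $Ch(N)$ is a norming boundary for $N$.

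\textbf{Part (a).} First I would record that, since $0,1,i\in S$, the hypothesis yields $T_n0\to 0$, $T_n1\to T_\infty 1$ and $T_ni\to T_\infty i$ pointwise, so Steps 1 and 2 of the proof of Theorem \ref{t1} apply unchanged: $T_\infty$ is a linear isometry with $T_\infty 1=1$, $T_\infty$ is positive, $T_\infty f=f\circ\phi$ on $Ch(T_\infty(M))$ for a continuous surjection $\phi$, and $T_\infty f(y)=f(\varphi(y))$ for $f\in S$ and $y\in Ch(T_\infty(S))$, with $\varphi$ continuous. Then I would fix $f\in M$, $\epsilon\in(0,1)$ and a compact set $L\subseteq Ch(T_\infty(S))$, put $F:=f\otimes 1-1\otimes f$, and choose an open neighbourhood $U$ of $\bigtriangleup_X$ with $|F|<\epsilon$ on $U$. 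The key observation is that the function $f_{y'}\in S$ extracted in Step 3 depends on $y'$ only through the point $\varphi(y')\in Ch(S)$ and an auxiliary open neighbourhood of it whose product with itself lies in $U$; moreover $\|F\|$ depends only on $f$, not on $\epsilon$. Since $\varphi$ is continuous, $\varphi(L)$ is a compact subset of $Ch(S)$. For each $p\in\varphi(L)$ I would pick an open neighbourhood $W_p$ of $p$ in $X$ with $W_p\times W_p\subseteq U$ and, by Lemma \ref{B}, a function $g_p\in S$ with $\mathrm{Re}\,g_p\geq 0$ on $X$, $\mathrm{Re}\,g_p\geq 1$ on $W_p^c$ and $\mathrm{Re}\,g_p(p)<\epsilon/2$; by continuity there is a smaller open neighbourhood $O_p$ of $p$ on which $\mathrm{Re}\,g_p<\epsilon$. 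Extracting a finite subcover $O_{p_1},\dots,O_{p_k}$ of $\varphi(L)$, for each $y'\in L$ one has $\varphi(y')\in O_{p_j}$ for some $j$, and running the computation of Step 3 verbatim with $f_{y'}:=g_{p_j}$ (and $W_{p_j}$ in the role of its $V_{x'}$) gives, for every $y'\in L$,
\[
|\mathrm{Re}\,T_nf(y')-\mathrm{Re}\,T_\infty f(y')|\le \epsilon\,\|T_n1\|+\|F\|\Bigl(\max_{1\le l\le k}\|T_ng_{p_l}-T_\infty g_{p_l}\|+(\|T_n1\|+1)\,\epsilon\Bigr)+\|f\|\,\|T_n1-1\|.
\]
The right-hand side does not depend on $y'$, and since $1,g_{p_1},\dots,g_{p_k}\in S$, the hypothesis (with $T_\infty 1=1$) forces it to converge to $(1+2\|F\|)\epsilon$ as $n\to\infty$; since $\epsilon$ is arbitrary, $\sup_{y'\in L}|\mathrm{Re}\,T_nf(y')-\mathrm{Re}\,T_\infty f(y')|\to 0$. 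Applying this with $-if$ in place of $f$ handles the imaginary part, so $\{T_nf\}$ converges to $T_\infty f$ uniformly on $L$.

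\textbf{Part (b).} Here I would fix $f\in M$ and note that $T_nf,T_\infty f\in N$, so $g_n:=T_nf-T_\infty f\in N$. Arguing exactly as in the proof of Theorem \ref{t1}(b) --- and passing to the quotient space $Y/\!\sim$ and the function space $\widehat{N}$ if $N$ fails to separate the points of $Y$ --- the Choquet boundary $Ch(N)$ is a boundary for $N$, so $\|g_n\|=\sup_{Ch(N)}|g_n|$. If $Ch(N)$ is compact, it is a compact subset of $Ch(T_\infty(S))$ by hypothesis, whence part (a) gives $\sup_{Ch(N)}|g_n|\to 0$, i.e. $\|g_n\|\to 0$. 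If instead $Ch(T_\infty(S))$ is compact, then, being a superset of the boundary $Ch(N)$, it is itself a boundary for $N$, so $\|g_n\|=\sup_{Ch(T_\infty(S))}|g_n|$, and part (a) applied to the compact set $Ch(T_\infty(S))$ again gives $\|g_n\|\to 0$. In either case $\{T_nf\}$ converges to $T_\infty f$ uniformly on $Y$.

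\textbf{Main obstacle.} The hard part will be the uniformity in part (a): manufacturing a single finite family of test functions from Lemma \ref{B} that serves all $y'$ in the compact set $L$ simultaneously. The reduction to $\{g_{p_1},\dots,g_{p_k}\}\subseteq S$ achieves this using continuity of $\varphi$ and compactness of $\varphi(L)$; once it is in hand, the rest is a careful rereading of Step 3 of Theorem \ref{t1}, now with the dependence on $y'$ under uniform control and the convergence hypothesis invoked only on the finite set $\{1,g_{p_1},\dots,g_{p_k}\}\subseteq S$. (Alternatively, part (a) could be reached by combining the pointwise convergence from Theorem \ref{t1}(a) with equicontinuity of the families $\{T_nf\}$ on compact subsets of $Ch(T_\infty(S))$ and the Arzel\`a--Ascoli theorem; the route above avoids checking equicontinuity.) Part (b) is then immediate from the norming property of the Choquet boundary of $N$.
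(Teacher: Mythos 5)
Your proposal is correct, but part (a) is argued along a genuinely different route from the paper's. The paper first proves only pointwise convergence on $Ch(T_\infty(S))$ (Theorem \ref{t1}(a)), then upgrades it on a compact $K$ by showing that $\{T_n f\}$ is bounded and equicontinuous at each point of $K$ (via the same Step-3 inequality, now read as a modulus-of-continuity estimate) and invoking the Arzel\`a--Ascoli theorem: every subsequence has a further subsequence converging uniformly on $K$, necessarily to $T_\infty f$. You instead make the Step-3 estimate itself uniform in $y'$ by pushing the compact set $L$ forward under the continuous map $\varphi$, covering $\varphi(L)$ by finitely many neighbourhoods $O_{p_1},\dots,O_{p_k}$, and using the single finite family $\{1,g_{p_1},\dots,g_{p_k}\}\subseteq S$ of test functions for all $y'\in L$ at once; this is sound (each $W_{p_j}$ is a neighbourhood of $\varphi(y')$ with $W_{p_j}\times W_{p_j}\subseteq U$, and $\mathrm{Re}\,g_{p_j}(\varphi(y'))<\epsilon$, which is all Step 3 requires), and the resulting bound is independent of $y'$ and tends to $(1+2\|F\|)\epsilon$. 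Your route is more elementary and quantitative --- it avoids Arzel\`a--Ascoli, does not need the separate boundedness claim for $\{T_nf\}$ (boundedness of $\|T_n1\|$ already follows from $T_n1\to 1$ uniformly), and exhibits an explicit finite Korovkin test set per $(f,\epsilon,L)$ --- whereas the paper's route recycles Theorem \ref{t1}(a) verbatim and yields the equicontinuity of $\{T_nf\}$ as a by-product. Part (b) in both treatments is the same appeal to the norming property of $Ch(N)$ (via the quotient $Y/\!\sim$ when $N$ does not separate points), so no further comment is needed there.
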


 \begin{proof}
(a) As in the proof of Theorem \ref{t1}, there is  a continuous surjection
 $\varphi:Ch(T_\infty(S))\longrightarrow
Ch(S)$ such that for all $f\in S$,
\[T_\infty f(y)=f(\varphi(y)) \,\,\, (f\in S, y\in Ch(T_\infty(S))).\]
Suppose that $K$ is a compact subset of $Ch(T_\infty(S))$. Let $f\in M$,  $y'\in K$ and $\epsilon>0$. Put $F=f \otimes 1-1 \otimes f$   and  $x'=\varphi(y')$. As before, we choose an open neighborhood
 $V_{x'}$ of $x'$ and a function $f_{y'}\in S$ such that
 $\textrm{Re} f_{y'}\geq 0$  on $X$,  $\textrm{Re} f_{y'}\geq 1$ on $V_{x'}^{c}$ and $\textrm{Re} f_{y'}(x')< \epsilon$,
and we also  have
\[|\textrm{Re} T_n f-\textrm{Re} T_\infty f| \leq \epsilon T_n 1 + \|F\|(| T_n f_{y'} -T_\infty f_{y'} |+  \textrm{Re} T_\infty  f_{y'}+ T_n 1 \textrm{Re} T_\infty  f_{y'})+ |\textrm{Re} T_\infty f|| T_n1-1|,\]

%
%
%
on $Y$. Now, we prove the following claim.

\textbf{Claim:} The set  $\{T_n f:n\in \Bbb N\}$ is equicontinuous at $y'$.

Since  $\{T_n f_{y'}\}$ and  $\{T_n 1\}$  converge uniformly to
 $T_\infty      f_{y'}$ and 1, respectively,  there is an integer $n_0$ such that for each $n\geq n_0$, $\|T_n f_{y'}-T_\infty f_{y'}\|< \epsilon$ and $\|T_n 1-1\|< \epsilon$. On the other hand, $\textrm{Re} T_\infty f_{y'}(y')< \epsilon$
and so, from the continuity of $\textrm{Re} T_\infty f_{y'}$ and $T_\infty f$, we can choose a neighborhood $W_{y'}$ of $y'$ so that the inequalities $\textrm{Re} T_\infty f_{y'}<\epsilon $ 
and $|T_\infty f-T_\infty f (y')|<\epsilon$ hold on $W_{y'}$. Hence, letting $\eta=\sup_{i\in \Bbb N} \|T_i 1\|$,  for each $y\in W_{y'}$ and $n\geq n_0$ we get
\begin{align*}
|\textrm{Re} T_n f(y)-\textrm{Re} T_n f(y')|&\leq |\textrm{Re} T_n  f(y)- \textrm{Re} T_\infty  f(y)|  +|\textrm{Re} T_n  f(y')- \textrm{Re} T_\infty  f(y')| +\\ &|\textrm{Re} T_\infty  f(y)- \textrm{Re} T_\infty  f(y')|\leq  \eta\epsilon+ \|F\|(| T_n f_{y'}(y)-T_\infty f_{y'}(y)|+  \textrm{Re} T_\infty f_{y'}(y)+  \\ &\eta T_\infty f_{y'}(y)) + \|f\| | T_n1(y)-1|+
 \eta\epsilon+ \|F\|(| T_n f_{y'}(y')-T_\infty f_{y'}(y')|+\\ &\textrm{Re} T_\infty f_{y'}(y')+ \eta T_\infty f_{y'}(y'))+\|f\| | T_n1(y')-1|+ |\textrm{Re} T_\infty  f(y)- \textrm{Re} T_\infty  f(y')|\\ &\leq  \eta\epsilon+ \|F\|(\epsilon+ \epsilon+ \eta\epsilon)+\|f\| \epsilon+ \eta\epsilon+\|F\|(\epsilon+ \epsilon+ \eta\epsilon)+\|f\| \epsilon+\epsilon\\ &=\epsilon(2\eta + 2\|f\|+ 4 \|F\|+2\eta \|F\|)+ \epsilon.
\end{align*}
Now, from the continuity of $T_1 f, ..., T_{n_0} f$, it follows that the set  $\{\textrm{Re} T_n f:n\in \Bbb N\}$ is equicontinuous at $y'$. Similarly, the set $\{\textrm{Im} T_n f:n\in \Bbb N\}$ is equicontinuous at $y'$, and, as a consequence, $\{T_n f:n\in \Bbb N\}$ is equicontinuous at $y'$, as claimed.

\medskip
Moreover, as observed in the proof of Theorem \ref{t1}(b), $\{T_n f:n\in \Bbb N\}$ is bounded. Therefore, from the Arzela-Ascoli theorem and Theorem \ref{t1}(a), it follows that each
subsequence $\{T_n f\}$ has a uniformly convergent sequence to $T_\infty      f$ on $K$. This argument shows that
$\{T_n f\}$ converges uniformly to
 $T_\infty      f$ on the  compact   set $K$.

\medskip
(b) When either $Ch(T_\infty(S))$ or $Ch(N)$ is compact, then, from the above discussion, we deduce that  $\{T_n f\}$ converges uniformly to
 $T_\infty      f$ on $Ch(N)$. Next, since  $Ch(N)$ is a boundary for $N$, it is immediately seen that $\{T_n f\}$ converges uniformly to
 $T_\infty      f$ (on $Y$).
 \end{proof}


\begin{rem}
\em{ We would like to remark that the sequential version of Korovkin's theorem does not yield its net version (see \cite{Sche}). However, it can be easily checked that our techniques hold  true when we replace the sequence  $\{T_n\}$ by a net of positive linear maps.}



\end{rem}

 In the following corollary, we obtain the main results of \cite{HO}, namely, \cite[Theorem 3.3]{HO} and \cite[Theorem 4.1]{HO} as consequences of Theorems \ref{t1} and \ref{t2}.

 \begin{cor}  \label{co}
 Let $M$ be a subspace of $C(X)$, $S\subseteq M$ be  a function space, $\{T_n\}$ be a sequence of unital linear contractions from $M$ into $C(Y)$,  $T_\infty$ be a linear isometry from $M$ into $C(Y)$,   and $Ch(N)\subseteq Ch(T_\infty(S))$, where  $N:=\textrm{Span} \, \bigcup\limits_{1\leq n\leq \infty} T_n(M)$. 

 (a) If  $T_n f\longrightarrow T_\infty f$ for all $f\in S$, then $T_n f\longrightarrow T_\infty f$ for all $f\in M$.

 (b)  If  $\{T_n f\}$ converges uniformly to
 $T_\infty      f$ for all $f\in S$, then    $\{T_n f\}$ converges uniformly to
 $T_\infty      f$  on each compact subset of $Ch(T_\infty(S))$ for any $f\in M$. If, furthermore,  $Ch(T_\infty(S))$ or $Ch(N)$ is compact, then  $\{T_n f\}$ converges uniformly to
 $T_\infty      f$ for all $f\in M$.

\end{cor}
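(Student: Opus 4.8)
The plan is to derive both parts directly from Theorems \ref{t1} and \ref{t2}, so almost all the work consists in checking that the hypotheses of those theorems hold in this more restrictive situation. Keeping in mind the standing assumptions of Section 2 (so that $M$ is self-conjugate and $S$ is a function space contained in $M$), note first that a linear isometry $T_\infty\colon M\to C(Y)$ is, in particular, an isometry from $M$ \emph{onto} its range $T_\infty(M)\subseteq C(Y)$, so the hypothesis imposed on $T_\infty$ in Theorems \ref{t1} and \ref{t2} is met; and since $T_n 1=1$ for every $n$, the family $\{T_n 1:n\in\Bbb N\}$ is bounded. The only point that is not purely formal is that every unital linear contraction $T_n$ is automatically a \emph{positive} linear map, and I would establish this first.

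To prove that claim, fix $n\in\Bbb N$ and $y\in Y$ and consider the functional $\delta_y\circ T_n$ on $M$: it is linear, has norm at most $\|\delta_y\|\,\|T_n\|\le 1$, and sends the constant function $1$ to $1$. By the Hahn--Banach theorem it extends to a functional $\nu$ on $C(X)$ with $\|\nu\|\le 1$; because $\nu(1)=1$ we in fact have $\|\nu\|=1$, and the Riesz representation theorem identifies $\nu$ with a regular Borel measure on $X$ for which $|\nu|(X)=\nu(X)=1$. A standard argument then forces $\nu\ge 0$ (Jordan decomposition in the real case; in the complex case $|\nu(X)|=|\nu|(X)$ makes the argument of $\nu$ constant $|\nu|$-almost everywhere, hence $0$ since $\nu(X)=1>0$), so $\nu$ is a probability measure. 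Consequently, for each $f\in M$ with $f\ge 0$ we get $T_n f(y)=(\delta_y\circ T_n)(f)=\int_X f\,d\nu\ge 0$, and as $y\in Y$ was arbitrary, $T_n f\ge 0$; thus $T_n$ is positive.

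Once this is in hand, part (a) follows immediately by applying Theorem \ref{t1}(b): $\{T_n\}$ is a sequence of positive linear maps from $M$ into $C(Y)$, $T_\infty$ is an isometry onto $T_\infty(M)$, $T_n f\longrightarrow T_\infty f$ on $S$, $Ch(N)\subseteq Ch(T_\infty(S))$, and $\{T_n 1\}$ is bounded, so $T_n f\longrightarrow T_\infty f$ for all $f\in M$, which is \cite[Theorem 3.3]{HO}. For part (b), positivity of the $T_n$ together with the uniform convergence of $\{T_n f\}$ to $T_\infty f$ on $S$ lets us invoke Theorem \ref{t2}(a), yielding uniform convergence of $\{T_n f\}$ to $T_\infty f$ on every compact subset of $Ch(T_\infty(S))$ for each $f\in M$; if in addition $Ch(T_\infty(S))$ or $Ch(N)$ is compact, then, since $Ch(N)\subseteq Ch(T_\infty(S))$, Theorem \ref{t2}(b) upgrades this to uniform convergence on all of $Y$, which is \cite[Theorem 4.1]{HO}. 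I do not anticipate any real difficulty here: the one substantive ingredient is the positivity of unital contractions, a classical Hahn--Banach/Riesz computation, and the rest is just matching hypotheses — the main thing to watch is that the standing assumptions (self-conjugacy of $M$, $S$ a function space inside $M$) under which Theorems \ref{t1} and \ref{t2} are proved are indeed in force, and that ``isometry from $M$ into $C(Y)$'' may be read as ``isometry from $M$ onto $T_\infty(M)$''.
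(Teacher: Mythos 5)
Your overall strategy---reduce to Theorems \ref{t1} and \ref{t2} by showing that unital linear contractions are positive---is exactly the paper's, and your Hahn--Banach/Riesz argument for that positivity is correct (it is precisely the content of the reference to Phelps \cite{P} that the paper invokes for this step). The gap is the very point you flag at the end and then dismiss: the corollary does \emph{not} keep the standing assumption that $M$ is self-conjugate. It deliberately restates the hypothesis as ``let $M$ be a subspace of $C(X)$'' because it is meant to recover \cite[Theorems 3.3 and 4.1]{HO}, which are stated for general unital subspaces; and Theorems \ref{t1} and \ref{t2} are proved only under self-conjugacy of $M$ (their proofs repeatedly apply $T_n$ to $\mathrm{Re}\,f$ for $f\in M$, which requires $\mathrm{Re}\,f\in M$). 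So in the complex case your appeal to those theorems is not justified as written, and this is exactly the case to which the paper devotes the bulk of its proof.

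The paper closes this gap by passing to the self-conjugate space $M+\overline{M}=\{f+\overline{g}: f,g\in M\}$: using the representation $T_\infty f=f\circ\varphi$ on $Ch(T_\infty(M))$ coming from \cite[Lemma 2.5]{HO}, and the fact that $Ch(M+\overline{M})=Ch(M)$ and $Ch(T_\infty(M)+\overline{T_\infty(M)})=Ch(T_\infty(M))$ are boundaries, one extends $T_\infty$ to a linear isometry $\widetilde{T}_\infty(f+\overline{g})=f\circ\varphi+\overline{g\circ\varphi}$ on $M+\overline{M}$; and by \cite[Lemma 3.2]{HO} each unital contraction $T_n$ extends to a positive linear map on $M+\overline{M}$. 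Theorems \ref{t1} and \ref{t2} are then applied to these extensions. In the real-scalar case (where self-conjugacy is automatic) your argument is complete; what is missing is precisely this extension step for complex scalars, so you should either add it or restrict your claim to the real/self-conjugate situation.
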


 \begin{proof}
In the context of real-valued function spaces, since every linear map $\mathcal{T}$ with $\|\mathcal{T}\|=\mathcal{T}(1)=1$ is positive (\cite{P}), the result follows immediately from Theorems  \ref{t1} and  \ref{t2}. Now let us consider the complex case. We note that
\[M+\overline{M}=\{f+\overline{g}: f,g\in M\}\]
is a self-conjugate subspace of $C(X)$. According to \cite[Lemma 2.5]{HO} (or \cite[Corollary 3.2]{AF}), there is a continuous surjection $\varphi:Ch(T_\infty(M))\longrightarrow
Ch(M)$ such that
\[T_\infty f(y)=f( \varphi (y))\,\,\, (f\in M, y\in Ch(T_\infty(M))).\]
Since $Ch(T_\infty(M)+\overline{T_\infty(M)})=Ch(T_\infty(M))$ and $Ch(M+\overline{M})=Ch(M)$ (\cite[Lemma 2.3]{HO})  are boundaries, $T_\infty$ can be extended to a linear
 isometry $\widetilde{T}_\infty:M +\overline{M}\longrightarrow C(Y)$ such that
 \[\widetilde{T}_\infty(f+\overline{g})(y)=f( \varphi (y))+ \overline{g( \varphi (y))}\,\,\, (f,g\in M, y\in Ch(T_\infty(M))).\]
 Moreover, by  \cite[Lemma 3.2]{HO}, each $T_n$ can be extended to a positive linear map $\widetilde{T}_n$ from $\overline{M}+M$ into $C(Y)$. Now,  we get the result from Theorems  \ref{t1} and  \ref{t2}.
\end{proof}

\section{Examples}

In this section  we provide several examples which show how our results can be applied.

\begin{exam} \label{example}
\em{ Let   $k\in \Bbb N\cup \{0,\infty\}$ and $C^{(k)}(I)$ denote  the   space of  $k$-times continuously
differentiable functions  on the interval
 $I=[0,1]$ which is a self-conjugate space.
Suppose that  $\{T_n\}$ is a sequence of positive linear maps from $C^{(k)}(I)$ into $C(I)$ satisfying
\[T_n 1\longrightarrow 1, \,\,\, T_n x\longrightarrow x, T_n x^{2}\longrightarrow x^{2}.\]
For each $a\in I$, the function $h(x)=1-(x-a)^{2}$ belongs to the function space $S=Span\{1,x,x^{2}\}$. Since  $h(a)=1$ and $|h(y)|<1$ for any $y\neq a$, we infer $Ch(S)=I$, by Remark \ref{RC}. Now from Theorem \ref{t1}, we conclude that
 $T_n f\longrightarrow f$ for all $f\in C^{(k)}(I)$.
Meantime, by Theorem  \ref{t2}, the same result holds true for  "uniformly convergence" instead of "pointwise  convergence", which can be also obtained from Korovkin's first theorem.}

\end{exam}



\begin{exam} \label{example}
\em{ Let   $\Omega$ be a non-empty open subset of $\Bbb R^{p}$ and $K$ be a compact subset of  $\Omega$. The term \textit{multi-index} denotes an ordered $p$-tuple $\alpha=(\alpha_1,..., \alpha_p)$
of nonnegative integers $\alpha_i$. For each multi-index $\alpha$, consider the differential operator
\[D^{\alpha}=\left(\frac{\partial}{\partial x_1}\right)^{\alpha_1} ...\left(\frac{\partial}{\partial x_p}\right)^{\alpha_p},\]
if $\alpha\neq 0$, and $D^{\alpha}f=f$ if $\alpha= 0$. A function $f$ on $\Omega$ is said to belong to $C^{\infty}(\Omega)$ if $D^{\alpha}f\in C(\Omega)$  for all  multi-index $\alpha$. By $\mathcal{D}_K$ we denote the space $\{f|_K: f\in C^{\infty}(\Omega)\}$. Since $\mathcal{D}_K$ may be considered as a function space on $K$, from our results we deduce the following.

If $\{T_n:\mathcal{D}_K\longrightarrow C(K):n\in \Bbb N\}$ is a sequence of positive linear maps such that  $T_n 1\longrightarrow 1$,  $T_n (P_k)\longrightarrow P_k$,
$T_n (\sum_{k=1}^{p} P_k^{2})\longrightarrow \sum_{k=1}^{p} P_k^{2}$, where $P_k$ is the projection
\[P_k(x)=x_k \,\, \textrm{for} \,\, x=(x_1,...,x_p),\]
then  $T_n f\longrightarrow f$ for all $f\in \mathcal{D}_K$. A similar result holds true for  "uniformly convergence" instead of "pointwise  convergence".

Let us remark that for any $a=(a_1,...,a_p)\in K$, the function
\[h(x)=b_1-(P_1(x)-a_1)^{2}+...+b_k-(P_p(x)-a_p)^{2}\,\,\,\,\, (x=(x_1,...,x_p)\in \Omega),\] where
$b_i> \max\{|P_i(x)-a_i|: x\in K\}$, $i=1,..., p$,
implies that $a$ belongs to the Choquet boundary of $S=Span\{1,P_1, ..., P_p, P_1^{2}, ..., P_p^{2}\}$ by Remark \ref{RC}.
}
\end{exam}

The following example includes 
 the complex Korovkin theorem.

\begin{exam} \label{example}
\em{ If 
$\{T_n:C(\Bbb T)\longrightarrow C(\Bbb T):n\in \Bbb N\}$ is a sequence of positive linear maps such that  $T_n 1\longrightarrow 1$ and  $T_n z\longrightarrow z$,
then  $T_n f\longrightarrow f$ for all $f\in C(\Bbb T)$.
Notice that here if $z_0\in \Bbb T$, then the function $h(z)=\frac{z+z_0}{2}$ works for Remark \ref{RC} ($S=Span\{1,z\}$).

Let $D$ be the closed unit disc $\{z\in \Bbb C: |z|\leq 1\}$ and $\{T_n:C(D)\longrightarrow C(D):n\in \Bbb N\}$ be  a sequence of positive linear maps such that  $T_n 1\longrightarrow 1$,  $T_n z\longrightarrow z$,
$T_n |z|^{2}\longrightarrow |z|^{2}$,
then  $T_n f\longrightarrow f$ for all $f\in C(D)$.

It should be noted that since $T_n$ is positive, it is easily seen that $T_n \bar{z}=\overline{T_n z}$, which yields $T_n \bar{z}\longrightarrow \bar{z}$.  Hence  for each  $z_0\in D$, the function $h(z)=1-\frac{|z-z_0|^{2}}{4}=1-\frac{|z|^{2}-\bar{z}z_0-\bar{z_0}z+|z_0|^{2}}{4}$, which belongs to $S=Span\{1,z,\bar{z}, |z|^{2} \}$,  is the appropriate function for Remark \ref{RC}.

The two above results holds true for  "uniformly convergence" instead of "pointwise  convergence".

}
\end{exam}

\begin{rem} \label{ex} \em{From our theorems, one can obtain the Korovkin-type results of \cite{9} and \cite{14} (with respect to both "uniformly convergence" and "pointwise  convergence"), which are generalizations of  Korovkin's second theorem on convergence of a  sequence of positive linear maps  for the space of real-valued continuous 2$\pi$-periodic functions
on $\Bbb R$.}
\end{rem}

\end{document}